
%
%
%

\documentclass[12 pt]{amsproc}
\usepackage{graphicx}
\usepackage{amssymb}
\usepackage{epstopdf}
\DeclareGraphicsRule{.tif}{png}{.png}{`convert #1 `dirname #1`/`basename #1 .tif`.png}
\usepackage{amsmath,amsthm,amscd,amssymb}
\usepackage{latexsym}
\usepackage[colorlinks,citecolor=red,pagebackref,hypertexnames=false]{hyperref}
\usepackage{geometry}                
\geometry{letterpaper}

\numberwithin{equation}{section}

\theoremstyle{plain}
\newtheorem{theorem}{Theorem}[section]
\newtheorem{lemma}[theorem]{Lemma}

\newtheorem{proposition}[theorem]{Proposition}

\theoremstyle{definition}
\newtheorem{definition}[theorem]{Definition}

\newtheorem{claim}[theorem]{Claim}

\newtheorem{case[theorem]}{Case}

\theoremstyle{remark}
\newtheorem{remark}[theorem]{Remark}

\numberwithin{equation}{section}

\def\be{\begin{equation}}
\def\ee{\end{equation}}

\def\od{\mathbb O(d)}
\def\odk{\mathbb O(d-k)}
\def\R{\Bbb R}

\def\bx{{\bf x}}
\def\by{{\bf y}}
\def\stabx{\text{Stab}(\bx)}
\def\staby{\text{Stab}(\by)}

\def\vt{\vec{t}\,}
\def\vv{\vec{v}\,}

\def\dkre{\Delta_{k}^{r}(E)}
\def\hd{\dim_{\mathcal H}}
\def\N{\Bbb N}

\def\({\left(}
\def\){\right)}
\def\[{\left[}
\def\]{\right]}
\def\<{\left\langle}
\def\>{\right\rangle}

\def\d{\partial}

\def\diam{\hbox{diam}}
\def\supp{\hbox{supp}}

\def\qed{\hfill\Box\smallskip}




\begin{document}

\title{Existence of similar point configurations  in thin subsets of $\R^d$}

\author{Allan Greenleaf, Alex Iosevich and Sevak Mkrtchyan}

\date{February 12, 2021}

\email{allan@math.rochester.edu}

\email{iosevich@math.rochester.edu}

\email{sevak.mkrtchyan@rochester.edu}

\address{Department of Mathematics, University of Rochester, Rochester, NY 14627}

\thanks{Revised, 12 February 2021. The first listed author was partially supported by NSF DMS-1362271 and -1906186. 
The second listed author was partially supported by NSA H98230-15-1-0319. 
The third listed author was partially supported by the Simons Foundation Collaboration 
Grant No. 422190.}

\maketitle

\begin{abstract} We prove the existence of similar and multi-similar point configurations (or simplexes) 
in sets of fractional Hausdorff dimension in Euclidean space. 
 Let $d \ge 2$ and $E\subset {\Bbb R}^d$ be a compact set.
For $k\ge 1$, define 
$$\Delta_k(E)=\left\{\left(|x^1-x^2|, \dots, |x^i-x^j|,\dots, |x^k-x^{k+1}|\right): 
\left\{x^i\right\}_{i=1}^{k+1}\subset E\right\} \subset {\Bbb R}^{k(k+1)/2},
$$ 
the {\it $(k+1)$-point configuration set} of $E$.
For $k\le d$, this is (up to permutations) the set of congruences of $(k+1)$-point configurations in $E$;
for $k>d$, it is the edge-length set of $(k+1)$-graphs whose vertices are in $E$.
Previous works by a number of authors have found values $s_{k,d}<d$ so that if the Hausdorff dimension of $E$  
satisfies $\hd(E)>s_{k,d}$,
then  $\Delta_k(E)$ has positive Lebesgue measure. 
In this paper we study more refined properties of $\Delta_k(E)$, 
namely the existence of  similar or multi--similar configurations.
For  $r\in\R,\, r>0$, let 
$$\Delta_{k}^{r}(E):=\left\{\vt\in \Delta_k\left(E\right): r\vt\in \Delta_k\left(E\right)\right\}\subset \Delta_k\left(E\right).$$
We show that if $\hd(E)>s_{k,d}$, for a natural measure $\nu_k$ on $\Delta_k(E)$,
one has $\nu_k\left(\Delta_{k}^{r}\left(E\right)\right)>\nolinebreak0$ all $r\in\R_+$.
Thus,   in $E$ there exist  many pairs of $(k+1)$-point configurations  
which are similar by the scaling factor $r$. 
 We extend this to show the existence of multi--similar configurations of any multiplicity. 
These results can be viewed as variants and extensions, for compact thin sets, of theorems 
of Furstenberg, Katznelson and Weiss \cite{FKW90}, Bourgain \cite{B86} and Ziegler \cite{Z06} 
for sets of positive density in $\R^d$.
\end{abstract} 

\maketitle


\section{Introduction}\label{sec intro}

\vskip.125in 

Furstenberg, Katznelson and Weiss \cite{FKW90} proved that if $A \subset {\Bbb R}^2$ has positive upper 
Lebesgue density and $A_{\delta}$ denotes its $\delta$-neighborhood, then, 
given vectors $u,v$ in ${\Bbb R}^2$, there exists $r_0$ such that,
 for all $r>r_0$  and any $\delta>0$,  there exists $\{x,y,z\} \subset A_{\delta}$ 
 forming a triangle 
 similar to $\{{\bf 0},u,v\}$, i.e.,
 congruent to $\{{\bf 0},ru,rv\}$ for some scaling factor $r>0$. 
Under the same assumptions, Bourgain \cite{B86} proved  in ${\Bbb R}^d$ that 
if $u^1, \dots, u^k \in {\Bbb R}^d$, $k \leq d$, there exists $r_0$ such that, for all $r>r_0$ 
and any $\delta>0$, there exists $\{x^1,x^2, \dots, x^{k+1}\} \subset A_{\delta}$ forming a 
simplex similar to $\{ {\bf 0}, u^1, \dots, u^k\}$ via scaling $r$. 
Perhaps the most general result of this approximate similarity type is due to Ziegler \cite{Z06}.
\vskip.125in

Bourgain also showed 
that if $k<d$ and the  simplex is non-degenerate, i.e., of positive $k$-dimensional volume, 
then the conclusion in \cite{B86}  holds for exact similarities ($\delta=0$):
for $r$ sufficiently large,
there exists $\{x^1,x^2, \dots, x^{k+1}\}\subset A$ similar to $\{ {\bf 0}, u^1, \dots, u^k\}$ via scaling $r$.
The purpose of this paper is to prove variants of such exact similarity results for compact sets $E$  of Hausdorff dimension 
$\hd(E)<d$, 
sometimes referred to as {\it thin sets}. 
The statements necessarily need to be different in this context than for sets
of positive upper 
Lebesgue density since, e.g.,  
in a compact $E$  the distances between points are  bounded above by $\diam(E)$. 
More fundamental restrictions are known:
e.g.,  there exist compact subsets of ${\Bbb R}^2$ of full Hausdorff dimension 
that do not contain vertices of any equilateral triangle (Falconer \cite{Falc80}). 
Nevertheless, we are able to prove that if $\hd(E)$ is above a threshold, 
then given \emph{any} $r>0$ there exist many pairs (in fact, a set of positive measure in an appropriate sense) 
of $(k+1)$-point configurations which are
similar via the scaling factor $r$ and whose vertices 
are, as in \cite{B86},  in $E$ itself and not just in a $\delta$-neighborhood of $E$. 
We also treat what we call \emph{multi-similarities}: multiple  configurations in 
$E$ which are jointly similar to each other via multiple scaling factors. 

\vskip.125in 

We now turn to the results of this paper; to state them, some definitions are needed.

\begin{definition} \label{kdistmeasuredef} 
Let   $d \ge 2$,  $E \subset {\Bbb R}^d$
be a compact set, and $\mu$  a Frostman measure on $E$. 
For $1\le k\le d$,
denote points of ${\Bbb R}^{k+1 \choose 2}={\Bbb R}^{k(k+1)/2}$ by $\vt:=(t^{ij})$,
and for $x^1,\dots, x^{k+1}\in\R^d$,
let $\vv_{k,d}(x^1, \dots, x^{k+1})\in \R^{k(k+1)/2}$ be the vector with 
entries $|x^i-x^j|$, $1 \leq i<j \leq k+1$, listed in the lexicographic order. 
\smallskip

(i) The {\it $k$-simplex set} or {\it $(k+1)$-point configuration set} of $E$ is
\begin{equation} \label{def kset} 
\Delta_k(E):=\left\{\vv_{k,d}(x^1, \dots, x^{k+1}): x^j \in E \right\}\subset {\Bbb R}^{k(k+1)/2}.
\end{equation}
Note that the $k$-simplex will necessarily be degenerate if $k>d$.

(ii) Define a measure $\nu_{k}$ on ${\Bbb R}^{k(k+1)/2}$, induced by a Frostman measure
\footnote{Recall that a compact  $E\subset\R^d$ always supports a Frostman measure; 
see \cite{Mat95} for  the definition of Frostman measures and their basic properties.}
 $\mu$ on $E$,
by the relation, for $f(\vt) \in C_0\left({\Bbb R}^{k(k+1)/2}\right)$,  
\begin{equation} \label{kmeasuredef} 
\int_{{\Bbb R}^{k(k+1)/2}} f(\vec{t})\,  d\nu_{k}(\vec{t})
:=\int \dots \int f\left(\vv_{k,d}\left(x^1, \dots, x^{k+1}\right)\right)\, d\mu(x^1) \cdots d\mu(x^{k+1}).
\end{equation}

\end{definition} 

\vskip.125in 

\begin{remark}
For $k\le d$, $\Delta_k(E)$ can be considered, modulo  the symmetric group $S_{k+1}$ acting on the $x^i$,
as the set of congruence classes of $(k+1)$-point configurations in $E$, 
or equivalently the set of (possibly degenerate) $k$-simplexes in $\R^d$ spanned by points of $E$.
The measure $\nu_k$ is  supported on $\Delta_k(E)$ and  has total mass at most $\mu(E)^{k+1}$. 
The action of the finite group $S_{k+1}$ will be  irrelevant for our results, 
which are expressed in terms of certain sets of configurations having positive $k(k+1)/2$-dimensional Lebesgue measure.
(The situation when $k>d$ is discussed below.)
\end{remark}

\vskip.125in 

The study of the Lebesgue measure of the distance set $\Delta_1(E)$ for thin sets was begun  in 1986 by Falconer \cite{Falc86}
and has led to many important results, comprising too large a literature to summarize here.
Most directly relevant,  the best  results currently known for the positivity of the measure of $\Delta_k(E),\, 1< k \le d$, 
are due to Erdo\u{g}an, Hart and Iosevich \cite{EHI13} and Greenleaf,
Iosevich, Liu and Palsson \cite{GILP13}. The former proved that the Lebesgue measure ${\mathcal L}^{k(k+1)/2}(\Delta_k(E))>0$ if 
$\hd(E)>\frac{d+k+1}{2}$,
while the latter obtained the threshold $\frac{dk+1}{k+1}$, 
improved to $\frac{8}{5}$ for $k=d=2$. 
We note that all of these results, except for \cite{GIOW18}, 
are proven by establishing that the measure $\nu_k$ defined by \eqref{kmeasuredef} has a density in  $L^2({\Bbb R}^{k(k+1)/2})$.
\vskip.125in

\begin{definition} \label{L2threshold} 
Let $d\ge 2,\, k\ge 1$. The {\it  $L^2$-threshold} for the $k$-simplex problem
(or  $(k+1)$-point configuration problem)
in $\R^d$  is
$$s_{k,d}:=\inf \left\{s: \hd(E)>s \implies \nu_k \hbox{ is absolutely continuous and} \int_{\Delta_k(E)} \nu^2_k(\vec{t})\, d\vec{t}<\infty \right\},$$ 
 where $E$ runs over all compact sets $E \subset {\Bbb R}^d$.
\end{definition} 

Thus, $s_{k,d}$ is less than or equal to the values mentioned in the paragraph above.

\vskip.125in

The case of $k>d$ needs to be treated somewhat differently, since in that range the set $\Delta_k(E)\subset\mathbb{R}^{k(k+1)/2}$ 
has lower dimension than $\mathbb{R}^{k(k+1)/2}$ and so cannot have positive Lebesgue measure, regardless of the Hausdorff dimension of $E$. 
This stems from the fact that, when $k>d$, specifying the $k(k+1)/2$ pairwise distances between $k+1$ 
points in $\mathbb{R}^d$ gives an over-determined system: 
knowing only some of the distances determines the rest. 
Thus,  although $\Delta_k(E)$ still makes sense, the setup has to be modified. 
In Chatzikonstantinou, Iosevich, Mkrtchyan and Pakianathan \cite{CIMP17}  it was shown that for $k>d$ the set of congruence 
classes of $(k+1)$-tuples of elements of $E$
can be naturally viewed as a subset of ${\Bbb R}^{d(k+1)-{d+1 \choose 2}}$; if $m:=d(k+1)-\binom{d+1}{2}$ appropriately chosen distances are 
specified, then  the other  distances are determined, up to finitely many possibilities. 
Let $P$ be such a collection of $m$ edges. 
In the terminology of \cite{CIMP17}, $P$ is a maximally independent 
(in $\mathbb{R}^d$) 
subset of the edges of the complete graph on $k+1$ vertices. 
Extend the definition of $\vv_{k,d}$ to the case $k>d$ by setting $\vv_{k,d}(x^1, \dots, x^{k+1})=\left(\left|x^i-x^j\right|\right)_{(i,j)\in P}\in {\Bbb R}^{m}$, where the 
entries in the range are ordered lexicographically. Using this, we can define a measure $\nu_k$ on $\mathbb{R}^m$ and 
a set $\Delta_k(E)\subset\mathbb{R}^m$ similarly to (\ref{kmeasuredef}) in Def. \ref{kdistmeasuredef}. Note that $\nu_k$ and $\Delta_k(E)$ will 
depend on the choice of $P$, but for our purposes this is irrelevant, so we will fix a particular $P$ once and for all.
\vskip.125in 

While $\vv_{k,d}(x^1, \dots, x^{k+1})$ doesn't determine the congruence class of $(x^1,\dots,x^{k+1})$ uniquely, 
it identifies it up to a finite number of 
possibilities. The number of these possibilities is bounded above by a constant $u_{d,k}$, depending only on $d$ and $k$. 
In this sense, congruence classes of $(k+1)$-tuples of elements of 
a compact set $E$ in ${\Bbb R}^d$ for $k>d \ge 2$ 
can be naturally viewed as a subset of 
$\R^m$. It was shown in  \cite{CIMP17} that if $k>d$ 
and the Hausdorff dimension of $E$ is greater than $d-\frac{1}{k+1}$, 
then the 
$m$-dimensional Lebesgue measure of the set of congruence 
classes of $(k+1)$-point configurations with endpoints in $E$ is positive,
and, as with most of the results for $k\le d$,  this was shown by first establishing that 
the measure $\nu_k$ defined by \eqref{kmeasuredef} has a density in   $L^2(\mathbb{R}^m)$.
\vskip.125in

With the theorems of \cite{FKW90,B86,Z06} in mind, obtaining more refined structural information 
about $\Delta_k(E)$ is of natural interest. 
Since $E$ is compact
in our setting, the questions need to reflect the fact, that all the pairwise distances are bounded. 
We will show that, if $\hd(E)>s_{k,d}$, 
then 
among the $k$-simplexes of $E$, all possible similarity scaling factors, $0<r<\infty$,   
occur, and each one does so with positive $\nu_k$-measure.
Furthermore, we show that what we call {\it multi-similarities} of arbitrarily high multiplicity occur as well. Thus, this holds for the values
of $\hd(E)$  in the settings of all of  the  previous results referred to above where ${\mathcal L}^{k(k+1)/2}(\Delta_k(E))>0$ 
has been obtained, with the possible  exception of \cite{GIOW18}.
\vskip.125in

To make this more precise,
for  $r\in\R_+:=(0,\infty)$ let 
\be\label{def drk}
\Delta_{k}^{r}(E):=\left\{\vt\in \Delta_k\left(E\right): r\vt\in \Delta_k\left(E\right)\right\}\subset \Delta_k\left(E\right),
\ee
the set of all $k$-simplexes $\vt$ in $E$ for which there is also a  simplex in $E$ similar to $\vt$ via the scaling factor $r$.
Interchanging the roles of the two simplexes in such a pair, $\{\vt,\, r\vt\}\subset\Delta_k(E)$, note for later use that 
\begin{equation}\label{eqn rinverse}
\Delta_{k}^{1/r}(E)=\Delta_{k}^{r}(E).
\end{equation}
\vskip.125in 

One can not only  look for similar pairs $\{\vt,r\vt\}\subset \Delta_k(E)$, but more generally for similarities of higher multiplicity.

\begin{definition}\label{def multi}
A collection $\{\vt,\, r_1\vt,\, \dots ,\, r_{n-1}\vt\}\subset \Delta_k(E)$, with $\{1,r_1,\dots, r_{n-1}\}$ pairwise distinct, is an {\it $n$-similarity of $k$-simplexes in $E$,} also referred to as a {\it multi-similarity of multiplicity $n$}.
\end{definition}

\vskip.125in 

Our main results are the following. All are obtained under the assumptions that $d \ge 2$, $k\ge1$,  $E\subset{\Bbb R}^d$ is compact, $\mu$ is a Frostman measure on $E$ and $\nu_k$ is the measure induced by $\mu$ as in Def. \ref{kdistmeasuredef}. 
The following three results establish (quantitatively) the existence in $E$ of many multi-similarities of multiplicities 2, 3 and $n\ge 3$, resp.

\begin{theorem} \label{main1} Let $d\ge 2$, $E\subset\R^d$ compact and $k\ge 1$. Suppose that $\hd(E)>s_{k,d}$, the $L^2$-threshold for the $k$-simplex problem from Def. \ref{L2threshold}. 
Then there is a  lower bound, uniform in $r,\, 0<r<\infty$,
$$\nu_k(\Delta_k^r(E))\ge C(k,E)>0.$$

\end{theorem}

\vskip.125in 

With the same notation and assumptions as in Thm. \ref{main1}, we also have:

\begin{theorem} \label{main2} Suppose that $\hd(E)>s_{k,d}$. Then there exist distinct $r_1,r_2>0$, with 
$$\nu_k\left(\Delta_k^{r_1}\left(E\right)\cap \Delta_k^{r_2}\left(E\right)\right)>0.$$
In fact, for any partition $\R_+=\coprod_{\alpha\in A} R_\alpha$ with 
each $R_\alpha\ne\emptyset$ and countable, there exist distinct $\alpha_1,\alpha_2\in A$
and $r_1\in R_{\alpha_1},\, r_2\in R_{\alpha_2}$, such that 
$\nu_k\left(\Delta_k^{r_1}\left(E\right)\cap \Delta_k^{r_2}\left(E\right)\right)>0$.
\end{theorem}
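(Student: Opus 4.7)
The plan is to deduce Theorem \ref{main2} from Theorem \ref{main1} by a soft measure-theoretic argument. The partition assertion implies the first sentence by specializing to the partition of $\R_+$ into singletons, so I would focus directly on the stronger partition statement. The key observation is that the lower bound $\nu_k(\Delta_k^r(E))\ge C(k,E)>0$ in Theorem \ref{main1} is \emph{uniform} in $r\in\R_+$, while $\nu_k$ is a \emph{finite} measure of total mass at most $\mu(E)^{k+1}$.

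Fix a partition $\R_+=\coprod_{\alpha\in A}R_\alpha$ with each $R_\alpha$ nonempty and countable, and suppose, for contradiction, that
\[
\nu_k\bigl(\Delta_k^{r_1}(E)\cap\Delta_k^{r_2}(E)\bigr)=0\qquad\text{for all }r_1\in R_{\alpha_1},\ r_2\in R_{\alpha_2},\ \alpha_1\ne\alpha_2.
\]
For each $\alpha\in A$ set
\[
S_\alpha:=\bigcup_{r\in R_\alpha}\Delta_k^r(E).
\]
Since each $R_\alpha$ is countable, for distinct $\alpha_1,\alpha_2\in A$ the intersection $S_{\alpha_1}\cap S_{\alpha_2}$ is a countable union of the hypothesized $\nu_k$-null sets, hence itself $\nu_k$-null; the family $\{S_\alpha\}_{\alpha\in A}$ is therefore pairwise disjoint modulo $\nu_k$-null sets. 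On the other hand, Theorem \ref{main1} gives $\nu_k(S_\alpha)\ge C(k,E)>0$ for every $\alpha$.

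Because $\R_+$ is uncountable while each $R_\alpha$ is countable, the index set $A$ must be uncountable; in particular, it contains a countably infinite subset $A'$. Countable additivity of $\nu_k$ then yields
\[
\mu(E)^{k+1}\,\ge\,\nu_k\Bigl(\bigcup_{\alpha\in A'}S_\alpha\Bigr)\,=\,\sum_{\alpha\in A'}\nu_k(S_\alpha)\,\ge\,\sum_{\alpha\in A'}C(k,E)\,=\,+\infty,
\]
contradicting the finiteness of $\nu_k$. Hence some pair of distinct classes $R_{\alpha_1},R_{\alpha_2}$ must contain $r_1,r_2$ with $\nu_k(\Delta_k^{r_1}(E)\cap\Delta_k^{r_2}(E))>0$, as required.

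I do not anticipate a real analytic obstacle here; the entire content is carried by Theorem \ref{main1}. The only substantive point in the plan is recognizing that uniformity in $r$, combined with the finiteness of $\nu_k$, forces a pigeonhole-style overlap of the sets $\Delta_k^r(E)$ as $r$ ranges over any uncountable index family, and that the countability \emph{inside} each piece of the partition is precisely what makes a countable union of the hypothesized null intersections still null. Replacing "countable" by "uncountable" inside the $R_\alpha$ would break the argument, which is consistent with the precise formulation in the theorem.
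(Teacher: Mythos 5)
Your argument is correct, and for the main (positive-measure) assertion it takes a genuinely different and more self-contained route than the paper. The paper also forms the sets $S_\alpha=\bigcup_{r\in R_\alpha}\Delta_k^r(E)$ and uses the fact that a finite measure space cannot be partitioned into uncountably many disjoint sets of positive measure, but it only extracts from this the \emph{nonemptiness} of some intersection $\Delta_k^{r_1}(E)\cap\Delta_k^{r_2}(E)$; to upgrade to $\nu_k(\Delta_k^{r_1}(E)\cap\Delta_k^{r_2}(E))>0$ it picks one representative from each $R_\alpha$ and invokes Theorem \ref{main3}, i.e.\ the measure-theoretic pigeonhole principle of Lemma \ref{lemma pigeon} proved in the appendix. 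You instead run the contradiction directly at the level of measures: assuming all cross-class intersections are $\nu_k$-null, countability of each $R_\alpha$ makes the $S_\alpha$ pairwise disjoint modulo null sets, and a countably infinite subfamily then violates finiteness of $\nu_k$. This bypasses Theorem \ref{main3} and Lemma \ref{lemma pigeon} entirely, at the cost of not yielding the quantitative multi-similarity information those results provide. One small imprecision: in your displayed chain, $\nu_k\bigl(\bigcup_{\alpha\in A'}S_\alpha\bigr)=\sum_{\alpha\in A'}\nu_k(S_\alpha)$ is not literally countable additivity, since the $S_\alpha$ are only almost disjoint; you should first replace each $S_\alpha$ by $S_\alpha\setminus\bigcup_{\beta\in A',\,\beta\ne\alpha}(S_\alpha\cap S_\beta)$, which has the same measure because $A'$ is countable and the pairwise intersections are null, and then the (in)equality $\nu_k\bigl(\bigcup_{\alpha\in A'}S_\alpha\bigr)\ge\sum_{\alpha\in A'}\nu_k(S_\alpha)$ follows. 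This is a routine fix and not a gap.
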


\vskip.125in 

\begin{theorem} \label{main3} Suppose that $\hd(E)>s_{k,d}$. Then for all $n\in\N$, there exists an $M=M(n,k,E)\in\N$ such that for any  distinct $r_1,\dots,r_{M}\in\R_+$, there
exist distinct $r_{i_1},\dots, r_{i_n}$ such that
$$\nu_k\left(\Delta_k^{r_{i_1}}\left(E\right)\cap \Delta_k^{r_{i_2}}\left(E\right)\cap \cdots \cap \Delta_k^{r_{i_n}}\left(E\right)\right)>0.$$
\end{theorem}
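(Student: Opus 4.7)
The plan is to deduce Theorem \ref{main3} from Theorem \ref{main1} by an elementary averaging-plus-pigeonhole argument. The two key inputs are (i) the \emph{uniform} lower bound $\nu_k(\Delta_k^r(E)) \ge c := C(k,E) > 0$, which is independent of $r$, and (ii) the finiteness of the total mass $C' := \nu_k(\Delta_k(E)) \le \mu(E)^{k+1}$. Heuristically, one cannot fit arbitrarily many subsets of $\Delta_k(E)$, each of $\nu_k$-mass at least $c$, into a space of finite total mass $C'$ without forcing overlaps of high multiplicity.

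Fix $n \in \N$ and take $M = M(n,k,E) := \lfloor (n-1)C'/c \rfloor + 1$, so that $Mc > (n-1)C'$. Given any distinct $r_1,\dots,r_M \in \R_+$, set $A_i := \Delta_k^{r_i}(E) \subseteq \Delta_k(E)$ and consider the counting function
\begin{equation*}
f(\vt) := \sum_{i=1}^{M} \chi_{A_i}(\vt).
\end{equation*}
Theorem \ref{main1}, applied to each $r_i$, yields
\begin{equation*}
\int f \, d\nu_k \;=\; \sum_{i=1}^{M} \nu_k(A_i) \;\ge\; Mc \;>\; (n-1)C'.
\end{equation*}
If the set $B := \{\vt : f(\vt) \ge n\}$ were $\nu_k$-null, then $f \le n-1$ would hold $\nu_k$-a.e., and $\int f \, d\nu_k \le (n-1)\,\nu_k(\Delta_k(E)) \le (n-1)C'$, contradicting the previous display. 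Hence $\nu_k(B) > 0$. To extract a \emph{single} $n$-tuple of indices that works on a set of positive measure, I would write
\begin{equation*}
B \;=\; \bigcup_{\substack{S \subseteq \{1,\dots,M\} \\ |S| = n}} \Bigl( B \cap \bigcap_{i \in S} A_i \Bigr),
\end{equation*}
which is a finite union, and apply pigeonhole to obtain some $S = \{i_1,\dots,i_n\}$ with $\nu_k\bigl(\bigcap_{j=1}^{n} A_{i_j}\bigr) > 0$. The distinctness of $r_{i_1},\dots,r_{i_n}$ is automatic from the distinctness of $r_1,\dots,r_M$.

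There is no genuine obstacle beyond Theorem \ref{main1}: the argument above is essentially bookkeeping on top of the uniform positivity. The one point that deserves care is confirming that the constant $C(k,E)$ furnished by Theorem \ref{main1} really is independent of $r$, and in particular does not degenerate as $r \to 0$ or $r \to \infty$; this uniformity is the substantive content of Theorem \ref{main1} and is exactly what makes the pigeonhole step viable here. The resulting bound $M = \lfloor (n-1)\mu(E)^{k+1}/C(k,E) \rfloor + 1$ is linear in $n$, which seems natural given the $L^1$ nature of the averaging.
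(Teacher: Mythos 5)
Your argument is correct, but it takes a genuinely different route from the paper's. The paper reduces Theorem \ref{main3} to a general measure-theoretic pigeonhole principle (Lemma \ref{lemma pigeon}), which it proves in the Appendix by induction on $n$: the key step is a quantitative claim that among sufficiently many sets of measure at least $c$ some pair must intersect in measure at least $c^3/3$, and the inductive step pairs off sets, replaces $c$ by $c^3/3$, and recurses, so the resulting threshold $N(\mathcal X,c,n)$ grows very rapidly in $n$ (an iterated-cubing recursion). Your first-moment argument --- integrating the counting function $f=\sum_i\chi_{A_i}$, concluding that $\{f\ge n\}$ has positive $\nu_k$-measure once $Mc>(n-1)C'$, and then splitting that set over the finitely many $n$-element index sets --- proves the same Lemma \ref{lemma pigeon} in one stroke, with the much better bound $M=\lfloor (n-1)C'/c\rfloor+1$, linear in $n$. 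Both routes rest on exactly the two inputs you identify: the $r$-uniform lower bound of Theorem \ref{main1} (which the paper secures for all $r$ by proving the estimate for $0<r\le1$ and invoking the symmetry $\Delta_k^{1/r}(E)=\Delta_k^{r}(E)$ of \eqref{eqn rinverse}) and the finiteness of $\nu_k(\Delta_k(E))\le\mu(E)^{k+1}$. The only small point you leave implicit is the $\nu_k$-measurability of the sets $A_i=\Delta_k^{r_i}(E)$, which is needed for $\int f\,d\nu_k$ to make sense; the paper records in Section \ref{sec proofs23} that this follows from the definition \eqref{def drk}. With that noted, your proof is a clean and quantitatively superior substitute for the paper's Appendix argument.
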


\vskip.125in 

\begin{remark} \label{remark main}
More explicitly, Thm. \ref{main1} says that, given any $r>0$, there exist $(k+1)$-point configurations $\{x^1, x^2, \dots, x^{k+1}\}$ and $\{y^1, y^2, \dots, y^{k+1}\}$ in $E$ which are similar via the scaling factor $r$, 
i.e., there exists a translation $\tau \in {\Bbb R}^d$ and a rotation $\theta \in O_d({\Bbb R})$ such that $y^j=r \theta (x^j+\tau),\, 1\le j\le k+1$. 
Thus, among the $(k+1)$-point configurations or $k$-simplexes in $E$,   in the terminology of Def. \ref{def multi},
there exist (many) 2-similarities.
\vskip.125in 

Furthermore,  Thm. \ref{main2} states that there exist 3-similarities in $E$, i.e.,   triples of $(k+1)$-point configurations, $\{x^j\},\, \{y^j\},\, \{z^j\}$ in $E$  
and scalings $r_1,\, r_2$ so that $y^j=r_1 \theta_1 (x^j+\tau_1),\, z^j=r_2\theta_2(x^j+\tau_2)$ for appropriate
$\theta_1,\theta_2\in O_d({\Bbb R})$ and $\tau_1,\tau_2\in\R^d$, and furthermore that $r_1,\, r_2$ can be arranged to lie in different subsets of a 
partition of $\R_+$ as stated. For example, decomposing $\R_+$ into the multiplicative cosets of $\Bbb Q_+$,
 there exist similarities of multiplicity  3 with $r_2/r_1$  irrational; similarly, replacing the positive rationals with the positive algebraic numbers, there 
 exist such with $r_2/r_1$ transcendental.
 \vskip.125in 

Finally, Thm. \ref{main3} shows that there exist multi-similar $(k+1)$-point configurations in $E$ of arbitrarily high multiplicity, and that the scaling 
factors can be chosen to come from an arbitrary set of distinct elements of $\R_+$, as long as that set has large enough cardinality relative to the 
desired similarity multiplicity.
\end{remark}

\begin{remark}
We note that the conclusions of Remark \ref{remark main} hold when $k>d$ as well. 
Denoting $\bx:=(x^1,\dots,x^{k+1})$, the fact that $\vv_{k,d}(\bx)=\vv_{k,d}(\by)$ does not imply that $\bx$ and $\by$ are congruent and hence $\vv_{k,d}(\bx)=r\vv_{k,d}(\by)$ 
does not imply $\bx$ and $\by$ are similar. 
However, the conclusions of Remark \ref{remark main} still hold as follows. Recall from the introduction that by results of Chatzikonstantinou, Iosevich, Mkrtchyan and Pakianathan \cite{CIMP17},
 $\vv_{k,d}(\bx)$ determines 
the congruence type of $\bx$ up to at most  a bounded number $u_{d,k}$ of choices. 
Using Thm. \ref{main3} with $n\cdot u_{d,k}$ instead of $n$ we see 
that there exist $\bx,\bx_{i_1},\dots,\bx_{i_{nu_{d,k}}}$ and $r_{i_1},\dots,r_{i_{nu_{d,k}}}$ 
such that $\vv_{k,d}(\bx),r_{i_1}\vv_{k,d}(\bx_1),\dots,r_{i_{nu_{d,k}}}\vv_{k,d}(\bx_{nu_{d,k}})$ are all congruent. 
It follows that $\bx,\bx_{i_1},\dots,\bx_{i_{nu_{d,k}}}$ all fall within at most $u_{d,k}$ congruence classes; thus, 
by the pigeon hole principle, at least $n+1$ of them must be in some congruence class. 
This argument applies to the conclusions of Remark \ref{remark main} for the other Theorems as well.
\end{remark}

\vskip.25in 

\section{Proofs of Theorems \ref{main2} and \ref{main3}}\label{sec proofs23}

We start by showing that Thms. \ref{main2}  and  \ref{main3} follow from Thm. \ref{main1} by  measure-theoretic arguments. To prove Thm. \ref{main2}, 
let $\R_+=\coprod_{\alpha\in A} R_\alpha$ be a partition of $\R_+$ into a (necessarily uncountable) collection of nonempty countable subsets. 
From the definition \eqref{def drk}, it follows that each $\dkre$ is $\nu_k$-measurable. Hence, if for each $\alpha\in A$, 
with slight abuse of notation we 
define the set
$$
\Delta_k^\alpha(E):=\bigcup_{r\in R_\alpha} \dkre,
$$
then, being a countable union of measurable sets, each $\Delta_k^\alpha(E)$ is $\nu_k$-measurable. 
Furthermore,  combining $R_\alpha\ne\emptyset$, the monotonicity of $\nu_k$ and  Thm. \ref{main1},
one sees that each $\nu_k(\Delta_k^\alpha(E))>0$.
However, $\nu_k(\Delta_k(E))\le \mu(E)^{k+1}<\infty$,
and no finite (or even $\sigma$-finite) measure space can be the pairwise disjoint union of an uncountable collection of measurable 
subsets of positive measure. Thus, there must exist $\alpha_1\ne \alpha_2$ 
such that $\Delta_k^{\alpha_1}(E)\cap \Delta_k^{\alpha_2}(E)\ne\emptyset$; 
it follows that there are $r_j\in R_{\alpha_j},\, j=1,2$, 
such that $\Delta_k^{r_1}(E)\cap \Delta_k^{r_2}(E)\ne\emptyset$. 
For the full claim of Thm. \ref{main2},
that  there exist distinct $\alpha_1,\alpha_2\in A$
and $r_1\in R_{\alpha_1},\, r_2\in R_{\alpha_2}$, such that 
$\nu_k\left(\Delta_k^{r_1}\left(E\right)\cap \Delta_k^{r_2}\left(E\right)\right)>0$,
first make a choice of one representative from each 
of the  $R_\alpha$, then choose an arbitrary countably infinite subset of these, and finally
apply Thm. \ref{main3}.

\vskip.125in

For the proof of Thm. \ref{main3}, we use the  uniform lower bound from Thm. \ref{main1},
$\nu_k(\dkre)\ge C(E,k)>0,\, \forall r\in\R_+$,
combined with  $\nu_k(\Delta_k(E))<\infty$.
 Thm. \ref{main3} then follows from the following measure-theoretic pigeon-hole principle, which 
 might be of independent interest and whose  proof is deferred to the Appendix, Sec. \ref{sec app}.

 \begin{lemma}\label{lemma pigeon}
 Let $\mathcal X=(X,\mathcal M,\sigma)$ be a finite measure space.
 For  $0<c<\sigma(X)$, let $\mathcal M_c=\{A\in\mathcal M: \sigma(A)\ge c\}$. 
 Then, for every $n\in\N$, there exists an $N=N(\mathcal X,c,n)\in\N$ such that for any collection $\{A_1,\dots,A_N\}\subset \mathcal M_c$ of cardinality $N$, there is a subcollection $\{A_{i_1},\dots, A_{i_n}\}$ of cardinality $n$ such that $\sigma(A_{i_1}\cap\cdots\cap A_{i_n})>0$ and hence 
 $A_{i_1}\cap\cdots\cap A_{i_n}\ne\emptyset$.
 \end{lemma}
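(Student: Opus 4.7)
The plan is to deduce Lemma \ref{lemma pigeon} from a first-moment estimate on the sum of indicator functions, followed by a finite, set-theoretic pigeonhole step.

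Given $N$ sets $A_1,\dots,A_N\in\mathcal M_c$, I would consider the counting function
\[
f(x):=\sum_{i=1}^N \chi_{A_i}(x).
\]
Integrating gives the lower bound $\int_X f\,d\sigma=\sum_{i=1}^N \sigma(A_i)\ge Nc$. On the other hand, writing $E_n:=\{x\in X:f(x)\ge n\}$ and using $f\le n-1$ off $E_n$ and $f\le N$ everywhere, we obtain
\[
\int_X f\,d\sigma \le (n-1)\,\sigma(X\setminus E_n)+N\,\sigma(E_n)\le (n-1)\sigma(X)+N\sigma(E_n).
\]
Combining these two inequalities yields
\[
\sigma(E_n)\ge c-\frac{(n-1)\sigma(X)}{N}.
\]
Thus, choosing any integer $N>\dfrac{(n-1)\sigma(X)}{c}$ — for instance $N=N(\mathcal X,c,n):=\left\lceil \dfrac{n\sigma(X)}{c}\right\rceil$, which depends only on the data $\mathcal X$, $c$ and $n$ — forces $\sigma(E_n)>0$.

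Once $E_n$ has positive measure, the second step is purely combinatorial. By definition of $E_n$, every $x\in E_n$ lies in at least $n$ of the $A_i$, so
\[
E_n\subseteq \bigcup_{\substack{I\subseteq\{1,\dots,N\}\\ |I|=n}}\;\bigcap_{i\in I}A_i,
\]
a union of at most $\binom{N}{n}$ measurable sets. By subadditivity, at least one index set $I=\{i_1,\dots,i_n\}$ satisfies
\[
\sigma\!\left(\bigcap_{j=1}^n A_{i_j}\right)\ge \frac{\sigma(E_n)}{\binom{N}{n}}>0,
\]
giving the desired subcollection.

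There is no substantive obstacle; the only care needed is to ensure that the bound $N(\mathcal X,c,n)$ depends only on the stated parameters and not on the specific sets $A_i$, which the averaging argument above guarantees. The lemma is then applied in Theorem \ref{main3} by taking $(X,\mathcal M,\sigma)=(\Delta_k(E),\text{Borel},\nu_k)$ and $c=C(k,E)$, since Theorem \ref{main1} supplies $\nu_k(\Delta_k^r(E))\ge c$ uniformly in $r$ while $\nu_k(\Delta_k(E))\le \mu(E)^{k+1}<\infty$ provides the finite ambient measure.
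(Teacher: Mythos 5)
Your proof is correct, and it takes a genuinely different and more elementary route than the paper's. You use a first-moment (double-counting) argument: the counting function $f=\sum_i\chi_{A_i}$ has integral at least $Nc$, so the superlevel set $E_n=\{f\ge n\}$ must have positive measure once $N>(n-1)\sigma(X)/c$, and then finite subadditivity over the $\binom{N}{n}$ possible $n$-fold intersections covering $E_n$ produces one of positive measure. The paper instead first proves a quantitative pairwise statement (Claim 5.2: any sufficiently large family in $\mathcal M_c$ contains two sets whose intersection has measure at least $c^3/3$), established by a self-referential argument showing that the set of ``bad'' thresholds $c$ is invariant under $c\mapsto 2c-c^3/3$ and hence empty, and then runs an induction on $n$ by repeatedly extracting such pairs $B_m=A_i\cap A_j\in\mathcal M_{c^3/3}$ and applying the inductive hypothesis to the $B_m$. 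Your approach buys an explicit and essentially optimal bound $N=\lceil n\sigma(X)/c\rceil$ (depending only on $\sigma(X)$, $c$, $n$) together with a quantitative lower bound $\sigma(E_n)/\binom{N}{n}$ on the measure of the resulting intersection, whereas the paper's recursion degrades the constants badly (iterated cubing of $c$ and a correspondingly large $N$). The only points worth double-checking are minor and you handle them: $N\ge n$ so that $\binom{N}{n}$ makes sense (true since $\sigma(X)/c>1$), and the fact that your chosen $N$ strictly exceeds $(n-1)\sigma(X)/c$. Your final remark about how the lemma is applied in Theorem \ref{main3} matches the paper's usage.
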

 
 \vskip.125in

\section{Proof of Theorem \ref{main1}} \label{sec proof1}

\vskip.125in 

To keep the exposition simple, we first prove Thm. \ref{main1} in the case $k\le d$. In Sections \ref{sec proof1} and \ref{sec prop} we will assume $k\le d$. In the case $k>d$ the arguments are very similar. Since $\vv_{k,d}(x^1, \dots, x^{k+1})$ determines the congruence type of $(x^1, \dots, x^{k+1})$ up to at most $u_{d,k}$ choices, the constant $u_{k,d}$ will appear throughout the proof. However, since the results here are up to multiplicative constants, this doesn't play any essential role.

For $\epsilon>0$, define a smooth approximation of $\nu_k$ on $\R^{k(k+1)/2}$  by the density
\begin{equation} \label{kmeasureapprox} 
\nu^{\epsilon}_{k}(\vec{t})=\int \dots \int \prod_{1 \leq i<j \leq k+1} \sigma^{\epsilon}_{t_{ij}}(x^i-x^j) \prod_{l=1}^{k+1} d\mu(x^l), \end{equation} 
where $\sigma_t$ is the normalized surface measure on the sphere of radius $t$ in $\R^d$ and 
$\sigma_t^{\epsilon}(x):=\sigma_t*\rho_{\epsilon}(x)$, with 
$\rho \in C_0^{\infty}({\Bbb R^d})$, $\rho \ge 0$, $\supp(\rho)\subset\{|t|<1\}$, $\int \rho=1$ and $\rho_{\epsilon}(x)=\epsilon^{-d} \rho(\epsilon^{-1}x)$. 
Then  each $\nu^{\epsilon}_{k} \in C^\infty_0$ and $\nu^{\epsilon}_{k}\to \nu_k$ weak$^*$ as $\epsilon\to 0$.
Thus, 
$$ \nu_k(\Delta_k^r(E))=\lim_{\epsilon\to 0} \int_{\R^{k(k+1)/2}} \nu_k^{\epsilon}(r\vec{t})\, d\nu_k(\vec{t}).$$ 
By (\ref{kmeasuredef}), for $\epsilon$ fixed, 
$$\int_{\R^{k(k+1)/2}} \nu_k^{\epsilon}(r\vec{t})\, d\nu_k(\vec{t})= \int \nu_k^{\epsilon}\left(r(x^1-x^2), \dots, r(x^k-x^{k+1})\right)\, d\mu(x^1) \dots d\mu(x^{k+1}).$$ 
Using the definition in (\ref{kmeasureapprox}), we see that this  is 
\begin{equation} \label{pregilp} 
\approx \epsilon^{-{k+1 \choose 2}} 
\int \dots \int_{\left\{ \left|\,|x^i-x^j\right|-r\left|y^i-y^j|\, \right|<\epsilon;\, 1 \leq i<j \leq k+1\right\}} \, d\mu(x^1) \dots d\mu(x^{k+1})d\mu(y^1) \dots d\mu(y^{k+1}), \end{equation}
which we denote by $I_\epsilon$.
Here, and throughout, we write $X\lesssim Y$ (resp. $X \approx Y$) if there exist constants $0<c<C$, 
depending only on $k$, $E$ and the choice of $\rho$  (and thus implicitly on $d$), 
such that $X \lesssim C Y$ (resp., $cY \leq X \leq CY$). Also, we denote the $2(k+1)$-fold product measure in \eqref{pregilp}  and similar occurences by $\mu^{2(k+1)}$.

\vskip.125in

For each rotation $\theta \in O_d({\Bbb R})$, define a measure $\lambda_{r, \theta}$ on $\R^d$  by  
$$ \int f(z)\, d\lambda_{r, \theta}(z)=\int \int f(u-r\theta v)\, d\mu(u)d\mu(v),\, f\in C_0\left(\R^d\right).$$ 
This is the push-forward of $\mu\times\mu$ under the map $(u,v)\to u-r\theta v$,
has total mass  $||\lambda_{r,\theta}||=\mu(E)^2$, and is supported in $E-r\theta E$.
We show below  that, if $\hd(E)>s_{k,d}$,  for a.e. $\theta$, the measure $\lambda_{r,\theta}$ is absolutely continuous with a density in $L^{k+1}(\R^d)$,
which we  denote by $\lambda_{r,\theta}(\cdot)$.
Let $d\theta$ denote the Haar probability measure on $O_d(\R)$.

\begin{proposition} \label{groupactionbasicth} 
With the notation above, 
\begin{equation} \label{kgilpest} 
\liminf_{\epsilon \to 0} I_\epsilon \approx \int \int \left(\lambda_{r, \theta}\left(z\right)\right)^{k+1}\, dz\, d\theta. \end{equation} 
\end{proposition} 

\vskip.125in 

By definition, the quantity on the right hand side of (\ref{kgilpest}) is finite if $\hd(E)>s_{k,d}$, the $L^2$-threshold for the $k$-simplex problem. Prop. \ref{groupactionbasicth}  was proved in \cite{GILP13} in the case $r=1$; the proof in the general case is similar, but we supply it in the next section for the sake of completeness. 
\vskip.125in 

Continuing with the proof of Thm. \ref{main1},  by H\"older we have 
\begin{eqnarray}\label{eqn Holder}
\mu(E)^2 &=& \int\int \lambda_{r,\theta}(z)\cdot 1\, dz\, d\theta \nonumber \\
&\le& \left(\iint \left(\lambda_{r, \theta}\left(z\right)\right)^{k+1}\, dz\, d\theta\right)^{\frac1{k+1}}
\times \left(\iint _{\supp(\lambda_{r,\theta})\times O_d(\R)} 1^\frac{k+1}{k}\, dz\, d\theta\right)^{\frac{k}{k+1}}.
\end{eqnarray}
Since $\supp(\lambda_{r,\theta})$, being contained in $ E-r\theta E$, has Lebesgue measure $\lesssim (1+r^d) \mu(E)$,
we divide both sides of \eqref{eqn Holder} by the second factor on the right hand side and raise to the $k+1$ power to obtain 
$$\mu(E)^{k+1}(1+r^d)^{-(k+1)}\lesssim \iint \left(\lambda_{r, \theta}\left(z\right)\right)^{k+1}\, dz\, d\theta.$$
Combining this with Prop. \ref{groupactionbasicth}, 
we conclude that, for $\hd(E)>s_{k,d}$ and $0<r\le 1$,
  \begin{equation}
\liminf_{\epsilon \to 0} \int \nu_k^{\epsilon}(r \vec{t})\, d\nu_k(\vec{t})  \approx \int \int \left(\lambda_{r, \theta}\left(z\right)\right)^{k+1}\, dz\, d\theta  \gtrsim 1  .
\end{equation} 
It follows that $ \liminf_{\epsilon \to 0} \nu_k(\{\vec{t}: r \vec{t} \in \Delta_{k,\epsilon}(E)\}) \gtrsim 1,$
where $\Delta_{k,\epsilon}(E)$ is the $\epsilon$-neighborhood of $\Delta_k(E)$. 
Since the sets $\{\vec{t}: r \vec{t} \in \Delta_{k,\epsilon}(E)\}$ are nested as $\epsilon\searrow 0$, we conclude that, for $0<r\le 1$,  
\begin{equation} \label{eqn key}
\nu_k(\{\vec{t}: r \vec{t} \in \Delta_k(E)\})\gtrsim 1.
\end{equation}

However, by \eqref{eqn rinverse}, $ \nu_k\left(\left\{\vec{t}: r \vec{t} \in \Delta_k(E)\right\}\right)=  \nu_k\left(\left\{\vec{t}: r^{-1} \vec{t} \in \Delta_k(E)\right\}\right)$;
therefore,  \eqref{eqn key} holds for $1\le r<\infty$ as well.
This completes the proof of Thm. \ref{main1}, 
up to the verification of Prop. \ref{groupactionbasicth}.

\section{Proof of Proposition \ref{groupactionbasicth}}  \label{sec prop}

We will follow closely the argument in \cite[Sec. 2]{GILP13}.
It will be convenient to denote an ordered $(k+1)$-tuple $(x^1,\dots,x^{k+1})$ of elements of $\R^d$ by $\bx$.
If the corresponding set $\{x^1,\dots,x^{k+1}\}$ is a nondegenerate simplex (i.e., affinely independent), then
$$\pi(\bx):=span\{x^2-x^1,\dots, x^{k+1}-x^1\}$$
is a $k$-dimensional linear subspace of $\R^d$. 
 $\Delta(\bx)$ will denote the (unoriented) simplex  generated by $\{x^1,\dots,x^{k+1}\}$, i.e., the closed convex hull,  which is 
 contained in the affine plane $x^1+\pi(\bx)$. Both $\pi(\bx)$ and $\Delta(\bx)$ are independent of the order of the $x^j$. 
If $\{x^1,\dots,x^{k+1}\}$  is similar to $\{y^1,\dots, y^{k+1}\}$  by a scaling factor $r$,  
then, up to permutation of $y^1,\dots, y^{k+1}$, there exists a $\theta\in\od $ such that $x^j-x^1=r\theta(y^j-y^1),\, 2\le j\le k+1$, 
which is equivalent with $x^j-x^i=r\theta(y^j-y^i), 1\le i<j\le k+1$, 
and $\Delta(\bx)=(x^1-r\theta y^1)+ r\theta\Delta(\by)$. 
\vskip.125in

The group $\od$ acts on the Grassmanians $G(k,d)$ and $G(d-k,d)$ of $k$ (resp., $d-k$) 
dimensional linear subspaces of $\R^d$, and if $\bx$ is similar to $\by$, one has
$\pi(\bx)=\theta\pi(\by)$ and $\pi(\bx)^\perp=\theta\big(\pi(\by)^\perp\big)$.
The set of $\theta\in\od$ fixing $\pi(\bx)$ is a conjugate of $\odk\subset\od$, and we refer to this as the \emph{stabilizer} of $\bx$, denoted $\stabx$.

\vskip.125in

For $\bx,\, \by$ similar, let $\tilde{\theta}\in\od$  be such that it transforms $\by$ to $\bx$. I.e. we have $\pi(\bx)=\tilde{\theta}\pi(\by)$ and  $x^i-x^j=r\tilde{\theta}\omega(y^i-y^j)$ for all $\omega\in \staby$. 
For each $\by$, take a  cover of $\od / \staby$ by balls  of radius $\epsilon$ (with respect to some Riemannian metric) 
with finite overlap.  
Since the dimension of
 $\od/\staby$ is that of  $\od / \odk$, namely
$$\frac{d(d-1)}{2} - \frac{(d-k)(d-k-1)}{2} = kd - \frac{k(k+1)}{2},$$
one needs $N(\epsilon)\lesssim \epsilon^{-\left(kd - \frac{k(k+1)}{2}\right)}$ balls to cover it.
In these balls, choose  sample points, $\tilde{\theta}_m(\by),\, 1\le m\le N(\epsilon)$.
\vskip.125in

For a fixed $\by$ one sees that
\begin{eqnarray*}
& & \left\{\bx: \left||x^i-x^j|-r|y^i-y^j|\right|\leq \epsilon, \ 1 \leq i<j \leq k+1 \right\}\quad \\
& &\,\subseteq \bigcup\limits_{m=1}^{N(\epsilon)} \Big\{\bx: 
\left|(x^i-x^j)-r\tilde{\theta}_m(\by) \omega(y^i-y^j)\right|\lesssim \epsilon, 
\\ & & \qquad\qquad\qquad\qquad 
\,\forall\, 1 \leq i<j \leq k+1,\, \omega\in\staby \Big\}.
\end{eqnarray*}
 Thus,  the right hand side of (\ref{pregilp}) is bounded above by
\begin{eqnarray}\label{samplesum}
\epsilon^{- \frac{k(k+1)}{2}} \int \dots \int\sum\limits_{m=1}^{N(\epsilon)} \mu^{k+1} \Big\{(\bx) &:& 
\left|(x^i-x^j)-r\tilde{\theta}_m(\by) \omega(y^i-y^j)\right|\lesssim \epsilon, \nonumber \\
& & \quad  \forall\, 1\, \leq i<j \leq k+1,\, \omega\in\staby \Big\}d\mu(y^1) \dots d\mu(y^{k+1}).
\end{eqnarray}
When picking the $N(\epsilon)$ balls, if each point of $O(d)/Stab(y)$ is covered by at most $p=p(d)$ of the balls, then the quantity in \eqref{samplesum}
also becomes,  when multiplied by $1/p$,  a lower bound for (\ref{pregilp}). 
Thus, the right hand side of (\ref{pregilp}) is comparable to the quantity in (\ref{samplesum}), which can  be rewritten as
\begin{multline} \label{gettingthere}
\epsilon^{- kd} \int \dots \int \sum\limits_{m=1}^{N(\epsilon)} \epsilon^{kd- \frac{k(k+1)}{2}}\mu^{k+1} \Big\{\bx: \left|(x^i-r\tilde{\theta}_m(\by)  
\omega y^i)-(x^j-r\tilde{\theta}_m(\by) \omega y^j)\right|\lesssim \epsilon, \\ \forall\, 1 \leq i<j \leq k+1, \omega\in\staby \Big\}d\mu(y^1) \dots d\mu(y^{k+1}) .
\end{multline}

Now fix $\by$. Since this holds for any choice of  sample points $\tilde{\theta}_m(\by)$, 
we can  pick these points such that they minimize (up to a factor of 1/2, say) the quantity
\begin{multline*}
 \mu^{k+1} \Big\{\bx: \left|(x^i-r\tilde{\theta}_m(\by) \omega y^i)-(x^j-r\tilde{\theta}_m(\by) \omega y^j)\right|\leq \epsilon, \, 
 \forall\, 1 \leq i<j \leq k+1, \omega\in\staby \Big\} .
\end{multline*}

The $N(\epsilon)$ preimages, under the natural projection from $\od$, of the balls  used to cover $\od / \staby$ are $\epsilon$-tubular neighborhoods of the preimages of the sample points  $\tilde{\theta}_m(\by)$, which
we denote  $T_1^{\epsilon},\ldots,T_{N(\epsilon)}^{\epsilon}$;
these depend on $\by$, but we suppress  that in the notation.
Since $dim(\od / \staby)=kd - \frac{k(k+1)}{2}$, each  $T^\epsilon_m$ has measure $\sim \epsilon^{kd-\frac{k(k+1)}2}$ with respect to the Haar measure $d\theta$. 
Since the infimum over a set is less than or equal to the average over the set, it follows that
\begin{multline*}
\mu^{k+1} \Big\{\bx: \left|(x^i-r\tilde{\theta}_m(\by) \omega y^i)-(x^j-r\tilde{\theta}_m(\by) \omega y^j)\right|
\leq \epsilon, 
\\\qquad\qquad\qquad\qquad\qquad\qquad\qquad\qquad\qquad\qquad\qquad
\,\forall\, 1 \leq i<j \leq k+1,
\omega\in\staby \Big\} \\
\quad\approx \frac{1}{\epsilon^{kd - \frac{k(k+1)}{2}}} 
\int\limits_{T_m^{\epsilon}} \mu^{k+1} \left\{\bx: \left|(x^i-r\theta y^i)-(x^j-r\theta y^j)\right|\leq \epsilon, \ 1 \leq i<j \leq k+1 \right\} \, 
d\theta.
\end{multline*}
 Since the  collection $\{T_m^{\epsilon}\}$ have pointwise finite overlap 
(uniformly in $\epsilon$ and $\by$),
\linebreak integrating in $\by$ we see that the quantity in  (\ref{gettingthere}) is 
\begin{multline*}
\approx \epsilon^{-kd} \int \dots \int\sum\limits_{m=1}^{N(\epsilon)} \, 
\int\limits_{T_m^{\epsilon}} \mu^{k+1} \left\{\bx: 
\left|(x^i-r\theta y^i)-(x^j-r\theta y^j)\right|\leq \epsilon, \ 1 \leq i<j \leq k+1 \right\} \\d\theta\,  d\mu(y^1) \dots d\mu(y^{k+1}),
\end{multline*}
which is then
\begin{multline*}
\approx \epsilon^{-kd} \int \dots \int \int_{\od} \mu^{k+1} \left\{\bx: 
\left|(x^i-r\theta y^i)-(x^j-r\theta y^j)\right|\leq \epsilon, \ 1 \leq i<j \leq k+1 \right\} \\ d\theta d\mu(y^1) \dots d\mu(y^{k+1}).
\end{multline*}
Applying  Fubini-Tonelli we can rewrite this as
$$\approx \epsilon^{-kd} \int_{\od} \mu^{2(k+1)} \left\{(\bx,\by): 
\left|(x^i-r\theta y^i)-(x^j-r\theta y^j)\right|\leq \epsilon, \ 1 \leq i<j \leq k+1 \right\}d\theta,$$
and taking the lim inf, we obtain a quantity comparable to the expression (\ref{kgilpest}). 
This completes the proof of Proposition  \ref{groupactionbasicth}, and thus Theorems \ref{main1}, \ref{main2}, and \ref{main3}.

\section{Open question}\label{sec open}

The following is a natural question pertaining to the subject matter of Thm. \ref{main1}:

\vskip.125in 

In \cite{BIT16} it was shown that if $E$ is a compact subset of ${\Bbb R}^d$, of Hausdorff 
dimension greater than $\frac{d+1}{2}$, then there exists a non-empty open interval $I$ such that, 
for any $t \in I$, there exist $x^1, x^2, \dots, x^{k+1} \in E$ such that 
$|x^{j+1}-x^j|=t$, $1 \leq j \leq k$. In view of Thm. \ref{main1}, it seems reasonable to ask: given 
any $r>0$, do there exist $x,y,z \in E$ such that $|x-z|=r|x-y|$? This can be regarded as a pinned 
version of the case $k=1$ of Thm. \ref{main1}, in the sense that the endpoint $x$ is common to 
both segments whose length is being compared\footnote{Work of Krystal 
Taylor and two of the authors   now partially addresses this,
showing that if $\hd(E)>(2d+1)/3$, the set of such $r$ at least contains an interval \cite[Thm. 1.4]{GIT20}.}. 
Similar questions can be raised when $k>1$. 


\section{Appendix: A measure-theoretic Pigeon Hole Principle}\label{sec app}

 Unable to find Lemma \ref{lemma pigeon}  in the literature,
 and believing that it should be useful for other problems, we prove it here. 
Without loss of generality the  total measure  $\sigma(X)$ can be normalized to be equal to 1, 
so for the proof we restate the result as

\begin{lemma}\label{lemma pigeon rev}
 Let $\mathcal X=(X,\mathcal M,\sigma)$ be a probability space.
 For  $0<c<1$, let $\mathcal M_c=\{A\in\mathcal M: \sigma(A)\ge c\}$. 
 Then, for every $n\in\mathbb{N}$, there exists an $N=N(\mathcal X,c,n)\in\nolinebreak\mathbb{N}$ such that, for any collection $\{A_1,\dots,A_N\}\subset \mathcal M_c$ of cardinality at least $N$, there is a subcollection $\{A_{i_1},\dots, A_{i_n}\}$ of cardinality $n$ such that $\sigma(A_{i_1}\cap\cdots\cap A_{i_n})>0$ and hence 
 $A_{i_1}\cap\cdots\cap A_{i_n}\ne\emptyset$.
\end{lemma}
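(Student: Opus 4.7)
The plan is to prove Lemma \ref{lemma pigeon rev} via a first-moment (double counting) argument based on the multiplicity function
\[
N(x) \;:=\; \sum_{i=1}^{N} \mathbf{1}_{A_i}(x),
\]
which records how many of the given sets contain the point $x$. The hypothesis $\sigma(A_i)\ge c$ for each $i$ immediately yields the lower bound
\[
\int_X N(x)\, d\sigma(x) \;=\; \sum_{i=1}^{N}\sigma(A_i) \;\ge\; Nc.
\]

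Next I would argue by contrapositive: assume $\sigma(A_{i_1}\cap\cdots\cap A_{i_n})=0$ for every $n$-element subcollection $\{i_1,\dots,i_n\}\subset\{1,\dots,N\}$. The pointwise identity
\[
\binom{N(x)}{n} \;=\; \sum_{1\le i_1<\cdots<i_n\le N} \mathbf{1}_{A_{i_1}\cap\cdots\cap A_{i_n}}(x)
\]
holds because each side counts the $n$-subsets of $\{\,i:x\in A_i\,\}$. Integrating against $\sigma$ and interchanging the (finite) sum with the integral gives $\int_X \binom{N(x)}{n}\, d\sigma = 0$, so $\binom{N(x)}{n}=0$ for $\sigma$-a.e.\ $x$, which forces $N(x)\le n-1$ $\sigma$-a.e. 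Since $\sigma(X)=1$, integrating this pointwise bound yields $\int_X N(x)\, d\sigma \le n-1$.

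Combining the two estimates gives $Nc\le n-1$, so $N\le(n-1)/c$. Therefore the choice
\[
N(\mathcal X,c,n) \;:=\; \left\lfloor \tfrac{n-1}{c}\right\rfloor + 1
\]
makes the contrapositive hypothesis untenable, and hence some $n$-fold intersection $A_{i_1}\cap\cdots\cap A_{i_n}$ must have positive $\sigma$-measure, as required. I do not expect any real obstacle; the only points requiring a word of justification are the pointwise identity for $\binom{N(x)}{n}$ (immediate from its combinatorial meaning) and the interchange of sum and integral (trivial, as the sum is finite and all summands are nonnegative).
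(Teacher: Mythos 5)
Your proof is correct, and it takes a genuinely different route from the paper's. The paper proves the lemma by induction on $n$: it first establishes a quantitative pairwise statement (among sufficiently many sets of measure $\ge c$, some two intersect in measure $\ge c^3/3$, proved by a self-referential contradiction argument built around iterating the map $c\mapsto 2c-c^3/3$), then repeatedly extracts such pairs to produce many sets of measure $\ge c^3/3$ and applies the inductive hypothesis to those; the resulting $N$ degrades rapidly with $n$ (roughly tower-like in the recursion $N\mapsto 2N(c^3/3,n)+P_c$). Your first-moment argument, integrating the pointwise identity $\binom{N(x)}{n}=\sum_{i_1<\cdots<i_n}\mathbf{1}_{A_{i_1}\cap\cdots\cap A_{i_n}}(x)$ against $\sigma$, replaces all of this with a single double count and yields the explicit bound $N=\lfloor (n-1)/c\rfloor+1$, which is sharp (e.g.\ for $n=2$, $c=1/2$, two disjoint half-measure sets show $N=3$ is needed) and, unlike the paper's, is independent of the particular probability space $\mathcal X$. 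The only conventions you rely on — $\binom{m}{n}=0$ for integers $0\le m<n$, and interchanging a finite sum of nonnegative measurable functions with the integral — are unobjectionable, so the argument is complete; it is both more elementary and quantitatively stronger than the paper's.
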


To start the proof, first we establish the following claim, which is a quantitative strengthening of the statement for $n=2$:
\begin{claim}\label{claim}
 Let $\mathcal X=(X,\mathcal M,\sigma)$ be a probability space. Then for any $0<c<1$ there exists $P_c\in\mathbb{N}$ such that for any $N>P_c$, if $\{A_1,\dots,A_N\}\subset\mathcal{M}_c$, then there exist distinct $i,j\leq N$ such that $\sigma(A_i\cap A_j)\geq 
 c^3/3$.
\end{claim}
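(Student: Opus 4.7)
The plan is to use a standard second-moment / Cauchy--Schwarz argument on the counting function $f=\sum_{i=1}^N \mathbf{1}_{A_i}$. Since $\sigma(A_i)\ge c$ for each $i$, one has $\int f\, d\sigma = \sum_i \sigma(A_i) \ge Nc$. Because $\sigma(X)=1$, Cauchy--Schwarz gives $\left(\int f\, d\sigma\right)^2 \le \int f^2\, d\sigma$, and expanding the square yields
\[
\int f^2\, d\sigma \;=\; \sum_{i=1}^N \sigma(A_i) + \sum_{i\ne j} \sigma(A_i\cap A_j) \;\le\; N + \sum_{i\ne j}\sigma(A_i\cap A_j).
\]
Combining these inequalities, I would deduce $\sum_{i\ne j}\sigma(A_i\cap A_j) \ge N^2 c^2 - N$.

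Next, the pigeonhole principle applied to the $N(N-1)$ ordered pairs produces at least one ordered pair $(i,j)$ with $i\ne j$ and
\[
\sigma(A_i\cap A_j)\;\ge\; \frac{N^2 c^2 - N}{N(N-1)} \;=\; \frac{Nc^2 - 1}{N-1}.
\]
As $N\to\infty$, the right-hand side tends to $c^2$, which strictly exceeds $c^3/3$ since $c<1<3$. A straightforward algebraic rearrangement shows that the inequality $\frac{Nc^2-1}{N-1}\ge c^3/3$ is equivalent to $N\ge \frac{3-c^3}{c^2(3-c)}$. Hence taking
\[
P_c \;:=\; \left\lceil \frac{3-c^3}{c^2(3-c)}\right\rceil
\]
(which depends only on $c$, not on $\mathcal X$ itself) works: for any $N>P_c$, some pair has $\sigma(A_i\cap A_j)\ge c^3/3$, as claimed.

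There is no real obstacle here; the argument is essentially a second-moment inequality followed by averaging, and the particular threshold $c^3/3$ is just a convenient sub-optimal constant (one could equally well obtain any constant strictly less than $c^2$ at the cost of enlarging $P_c$). The only minor care needed is to verify that $c^2>c^3/3$ for all $0<c<1$, which is immediate. The claim then feeds into the inductive / iterated step of the proof of Lemma \ref{lemma pigeon rev}, where one restricts to the probability space $(A_i\cap A_j, \mathcal M\cap (A_i\cap A_j), \sigma(\cdot)/\sigma(A_i\cap A_j))$ and reapplies the claim to gain one more set in the common intersection at each stage.
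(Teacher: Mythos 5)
Your argument is correct: the second-moment bound $N^2c^2\le\bigl(\int f\,d\sigma\bigr)^2\le\int f^2\,d\sigma\le N+\sum_{i\ne j}\sigma(A_i\cap A_j)$ is valid, the averaging over the $N(N-1)$ ordered pairs is sound, and the algebra showing that $\frac{Nc^2-1}{N-1}\ge c^3/3$ once $N\ge\frac{3-c^3}{c^2(3-c)}$ checks out (one also verifies $P_c\ge1$, so $N>P_c$ guarantees $N\ge2$ and a pair exists). However, this is a genuinely different route from the paper's. The paper argues by contradiction and self-improvement: assuming the claim fails for $c$, it pairs the sets into unions $A_{2i-1}\cup A_{2i}$, shows these have measure at least $f(c):=2c-c^3/3$ while their pairwise intersections have measure less than $f(c)^3/3$, concludes that the set $S$ of "bad" values of $c$ is closed under $f$, and then derives a contradiction because iterating $f$ eventually pushes any $c\in(0,1)$ above $3/5$, where badness is impossible (the union of two nearly disjoint sets of measure $\ge 3/5$ would exceed total mass $1$). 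The constant $c^3/3$ in the statement is chosen precisely so that the inequality $4c^3/3\le f(c)^3/3$ makes that iteration close up. Your approach is more elementary and quantitative: it yields an explicit $P_c$ and in fact shows that any threshold strictly below $c^2$ (not merely $c^3/3$) is attainable, which is essentially sharp; the paper's bootstrapping gives no explicit $P_c$ without unwinding the iteration. One small remark on your closing sentence: in the paper's induction for Lemma \ref{lemma pigeon rev} there is no renormalization to a conditional probability space --- one simply extracts many pairwise intersections $B_m\in\mathcal M_{c^3/3}$ within the original space and applies the induction hypothesis with $c^3/3$ in place of $c$ --- but that concerns the lemma, not the claim, and does not affect the correctness of your proof of the claim itself.
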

\begin{proof}
Suppose not. Let $S\subset(0,1)$ be the set of all $c\in(0,1)$ such that the statement of the claim is false, and suppose $c\in S$. 
Then for every $ N\in\mathbb{N}$ there exists a subset $\{A_1,\dots,A_{2N}\}\subset\mathcal{M}_c$ such that $\sigma(A_i)\geq c$ for 
all $i$ but $\sigma(A_i\cap A_j)<c^3/3$ for all $i\ne j$. 
Consider the sets $A_{2i-1}\cup A_{2i}$, $i=1,\dots,N$. 
We have

\begin{equation*}
\sigma(A_{2i-1}\cup A_{2i})=\sigma(A_{2i-1})+\sigma(A_{2i})-\sigma(A_{2i-1}\cap A_{2i})
> c+c-\frac{c^3}3=2c-\frac{c^3}3.
\end{equation*}
Since $\sigma(X)=1\geq\sigma(A_{2i-1}\cup A_{2i})$, this implies $1> 2c-\frac{c^3}3$. 
In particular, since $c<1$, we have $c\lesssim 0.52<3/5$; hence $[3/5,1)\cap S=\emptyset$.
Moreover,
\begin{align*}
\sigma\big((A_{2i-1}&\cup A_{2i})\cap (A_{2j-1}\cup A_{2j})\big)
\\&=\sigma\big((A_{2i-1}\cap A_{2j})
\cup(A_{2i-1}\cap A_{2j-1})
\cup(A_{2i}\cap A_{2j})
\cup(A_{2i}\cap A_{2j-1})\big)
\\&\leq \sigma(A_{2i-1}\cap A_{2j})
+\sigma(A_{2i-1}\cap A_{2j-1})
+\sigma(A_{2i}\cap A_{2j})
+\sigma(A_{2i}\cap A_{2j-1})
\\&< 4\frac{c^3}3\leq \frac{(2c-c^3/3)^3}3\hbox{ since } 0<c<1.
\end{align*}
Thus, there exist $N$ sets, namely $A_1\cup A_2,\dots,A_{2N-1}\cup A_{2N}$, such that each has measure at least $f(c):=2c-\frac{c^3}3$ but all pairwise intersections have measure less than $\frac{f(c)^3}3$. 

Thus, we have shown that if $c\in S$ then  $f(c)\in S$ as well. 
However if $0<c<1$, then there exists $k\in\mathbb{N}$ such that $f^k(c)>3/5$ and is thus $\notin S$ 
(where $f^k$ denotes $f$ composed with itself $k$ times). It follows that $S$ must be empty.
$\qed$
\end{proof}

\vskip.125in 

We use Claim \ref{claim} as a building block for the proof of Lemma \ref{lemma pigeon rev}, which is by
 induction on $n$. 
 If $n=1$, then we can take $N=1$, since any $A_{i_1}\in \mathcal{M}_c$ satisfies the statement. 
 If $n=2$ then any $N\ge \lceil{ 1/c}\rceil$ suffices, 
 since there cannot be more than $1/c$ pairwise disjoint sets of measure $\geq c$ each; alternatively, one may simply invoke
 Claim \ref{claim}.
 
 \vskip.125in 

Now suppose that the conclusion of  Lemma \ref{lemma pigeon rev} holds for some $n$, $n\ge 2$. 
Set $N=2N(\mathcal{X},c^3/3,n)+P_c$, and suppose  $\{A_1,\dots,A_N\}\subset\mathcal{M}_c$ is a collection of cardinality $N$. 
Since $N>P_c$, by Claim \ref{claim}  there exist distinct $i,j\leq N$ such that $\sigma(A_i\cap A_j)>\frac{c^3}{3}$. Let $B_1=A_i\cap A_j$. 
Removing $A_i$ and $A_j$ from the collection we still have $N-2>P_c$ sets, so can find another pair whose intersection has 
measure at least $\frac{c^3}3$. Repeating this procedure $N(\mathcal{X},c^3/3,n)$ times, one finds sets 
$B_1,\dots,B_{N(\mathcal{X},c^3/3,n)}\in\mathcal{M}_{c^3/3}$. 
By the induction hypothesis there exist $0<i_1<i_2<\dots<i_n\leq N(\mathcal{X},c^3/3,n)$ 
such that $\sigma(B_{i_1}\cap\dots\cap B_{i_n})>0$. 
Since $B_{i_1}\cap\dots\cap B_{i_n}$ is the intersection of $2n$ distinct sets from the collection $\{A_1,\dots,A_N\}$, 
the intersection of any $n+1$ of those $2n$ will have positive measure, completing the induction step. 
$\qed$

\vskip.25in

\end{document}